\newtheorem{definition}{Definition}[section]
\newtheorem{theorem}{Theorem}[section]
\newtheorem{proposition}{Proposition}[section]
\newtheorem{lemma}[theorem]{Lemma}
\newtheorem{example*}{Example}
\newcommand{\ip}[2]{\left\langle {#1} , {#2} \right\rangle} 
\newcommand{\norm}[1]{\left\lVert #1 \right\rVert} 
 \newcommand{\N}{\mathbb{N}} 
\newcommand{\C}{\mathbb{C}}
\newcommand{\HH}{\mathcal{H}}
\newcommand{\KK}{\mathcal{K}}
\newcommand{\NN}{\mathcal{N}}
\newcommand{\RR}{\mathcal{R}}
\author[]{}
\author[]{}
\author[]{}
\title{Pseudo-Riesz Bases}
\author{Deborpita Biswas and Mishko Mitkovski}
\date{August 2024}
\begin{document}

\begin{abstract}
     In~\cite{holub1994bases} Holub introduced the concept of near-Riesz bases, as frames that can be considered Riesz bases for computational purposes or that exhibit certain desirable properties of Riesz bases. In this paper, we introduce a generalization of near-Riesz bases that includes sequences which are not necessarily frames. We demonstrate that this broader class of sequences retains many of the desirable properties of near-Riesz bases and establish fundamental perturbation results for this new class.   
\end{abstract}

\maketitle

\section{Introduction} Let $\HH$ be a separable Hilbert space.\,A sequence of vectors $\{f_n\}_{n=1}^\infty$ is said to be a Riesz basis for $\HH$ if there exists an invertible linear map $U:\HH\to \HH$ and an orthonormal basis $\{e_n\}_{n=1}^\infty$ for $\HH$ such that $Ue_i=f_i$ for all $i$. Equivalently, in frame theoretic terms, $\{f\}_{n=1}^\infty$ is a Riesz basis if it is an $l^2$-independent frame for $\HH$ or equivalently if it is a complete Riesz sequence. 
In~\cite{holub1994bases} J. Holub introduced the influential concept of a near-Riesz basis. According to his definition, $\{f_n\}_{n=1}^{\infty}$ is near-Riesz basis if it is a frame which becomes a Riesz basis by eliminating finite number of vectors. He proved that $\{f_n\}$ is a near-Riesz bases if and only if the associated synthesis operator is surjective and has finite dimensional kernel. Moreover, he proved that the dimension of the nullspace of the synthesis operator is equal to the number of the vectors that needed to be removed from the sequence to get a Riesz basis. Finally, he showed that near-Riesz bases are exactly the frames that possess the so called Besselian property: $\sum_n a_n f_n$ converges if and only if $(a_n)\in l^2(\N)$. 

Soon after near-Riesz bases were introduced, Cassazza and Christensen proved stability and perturbation results for near-Riesz bases in the spirit of the classical Paley-Wiener perturbation theorem for Riesz bases. After these early results there has been a great deal of research related to near-Riesz bases~\cite{najati2011,Zhangli2022, peter2020}. Very recently Heil and Yu in \cite{heil2023} established yet another characterization of near-Riesz bases in terms of Besselity of their alternative duals. 

The main goal of this note is to introduce a generalization of near-Riesz bases for Bessel sequences which are not necessarily frames. We say that a Bessel sequence $\{f_n\}$ in a Hilbert space $\HH$ forms a pseudo-Riesz bases for $\HH$ if it becomes a Riesz basis for by adding a finite number of vectors to the sequence $\{f_n\}$, removing a finite number of vectors from $\{f_n\}$, and replacing finitely many vectors of $\{f_n\}$ by another set of vectors. Clearly, every near-Riesz basis is a pseudo-Riesz basis. However our class is richer in the sense that it includes many other sequences of vectors. For example, it clearly contains Riesz sequences which can be turned into a Riesz basis by adding finitely many vectors. There are also pseudo-Riesz bases which are neither Riesz sequences nor frames: if $\{e_n\}$ is an orthonormal basis for $\HH$, then the sequence $\{e_1, e_1, e_3, e_4, \dots \}$ obtained by replacing $e_2$ with $e_1$ is a pseudo-Riesz basis which is neither a frame nor a Riesz sequence. 

We also show that many of the major results proved for near-Riesz bases can be extended to pseudo-Riesz bases with some appropriate modifications. Similarly to Holub, we prove a characterization of pseudo-Riesz bases in terms of their associated synthesis operator. We also provide perturbation theorems for pseudo-Riesz bases, as well as additional characterizations in terms of their dual and codual sequences.

\section{Preliminaries}
Let $\mathcal{H}$ be a separable Hilbert space. Recall that a sequence of vectors $\{f_n\}$ in $\mathcal{H}$ is a \emph{Bessel sequence} if there exists $M>0$ such that 
\[
    \sum_{n=1}^{\infty} |\langle f, f_n \rangle |^2 \leq M\norm{f}^2,
\]
or equivalently if $ \big\|\sum c_nf_n\big\|^2 \leq M\sum|c_n|^2$ for all $(c_n) \in l^2\left(\mathbb{N}\right).$ 

\subsection{Frames} A Bessel sequence $\{f_n\}$ forms a \emph{frame} if there exists a positive constant $m_1$ such that $$ m_1\, \|f\|^2 \leq \sum_{n=1}^{\infty} |\langle f, f_n \rangle |^2$$ for all $f \in \HH$. If this inequality holds only for those $f\in \HH$ which are in the closed span of $\{f_n: n\in \N\}$, then 
$\{f_n\}$ is said to be a \emph{quasi-frame}. In other words, quasi-frames are sequences that are frames in their closed linear span. Quasi-frames are complete if and only if they are frames. 

For a frame $\{f_n\}$ in $\HH$, the frame operator $F:\HH\to\HH$ is defined by $Ff=\sum_{n=1}^{\infty}\langle f,f_n\rangle f_n$. $F$ is always a bounded, invertible operator. The canonical dual frame to $\{f_n\}$ is defined by $\{F^{-1} f_n\}$. The canonical dual frame is used in the expansion formulas: 
\[f=\sum_{n=1}^{\infty}\langle f, F^{-1}f_n\rangle f_n= \sum_{n=1}^{\infty}\langle f, f_n\rangle F^{-1} f_n,\] for all $f\in\HH$. A sequence of vectors $\{g_n\}$ which satisfies 
\[f=\sum_{n=1}^{\infty}\langle f, g_n\rangle f_n,\] for all $f\in\HH$ which is not the canonical dual sequence is called an \emph{alternative dual sequence} for $\{f_n\}$. While the canonical dual is always a frame itself, alternative dual sequences are not necessarily frames themselves. However, if an alternative dual is a Bessel sequence, then it must be a frame. 

\subsection{Riesz sequences} A Bessel sequence $\{f_n\}$ is called a \emph{Riesz sequence} if there exists a positive constant $m_2$ such that $$m_2\sum_{n=1}^{\infty} |c_n|^2 \leq \big\|\sum_{n=1}^{\infty} c_n f_n \big\|^2$$ for all $(c_n) \in l^2\left(\mathbb{N}\right)$. If this inequality holds only for $(c_n) \in l^2\left(\mathbb{N}\right)$ such that $\sum_{n=1}^{\infty} c_n f_n\neq 0$, then $\{f_n\}$ is called a quasi-Riesz sequence. It is well-known that $\{f_n\}$ is a quasi-Riesz sequence if and only if it is a quasi-frame.  

It is well-known that for every Riesz sequence $\{f_n\}$ there exists a canonical Bessel sequence $\{g_n\}$ (obtained using the inverse of the Grammian operator $S^*S$) which is biorthogonal to it. This biorthogonal sequence is used to construct a solution $f\in\HH$ to the interpolation problem: For a given $(c_n) \in l^2\left(\mathbb{N}\right)$ find $f\in\HH$ such that \[\ip{f}{f_n}=c_n, \text{ for all } n\in \N.\] The canonical solution is given by $f=\sum_{n=1}^\infty c_n g_n$. In general, there might be other biorthogonal sequences which are not necessarily Bessel. However, if a biorthogonal sequence is Bessel, then it must be a Riesz sequence. 

\subsection{Riesz bases} A Bessel sequence is a \emph{Riesz basis} if it is both a frame and a Riesz sequence. A Riesz basis has a unique dual frame, the canonical dual frame, which is in the same time the unique biorthogonal sequence for the Riesz basis.

\subsection{Synthesis operator} Let $S: l^2(\N)\to \HH$ be the  \emph{synthesis operator} for the sequence $\{f_n\}$, defined by \[S(c)=\sum_{n=1}^\infty c_n f_n.\] $S$ is bounded if and only if $\{f_n\}$ is Bessel.\,Moreover, $\{f_n\}$ is a frame (Riesz sequence) if and only if $S$ is surjective (injective with closed range). Thus, $\{f_n\}$ is a Riesz basis if and only if its synthesis operator $S$ is invertible. 

A sequence $\{f_n\}$ is a quasi-frame or equivalently a quasi-Riesz sequence if and only if its synthesis operator $S$ has a closed range, or equivalently its adjoint $S^*$ has a closed range. These are also equivalent to 
\[\underset{c \notin \NN_{S}}{\text{inf}} \frac{\|S c\|}{d\left(c, \NN_{S}\right)} > 0,\] where, as usual, $d(c, \NN_{S})=\inf\{\norm{c-e} : e\in \NN_S\}$.

\subsection{Fredholm operators} A bounded linear operator $T: \HH \rightarrow \HH$ is called \emph{semi-Fredholm} if it has a closed range and either a finite dimensional nullspace or a finite codimensional range. It is called left semi-Fredholm if the first alternative holds, and right semi-Fredholm if the second one holds. For example, every surjective operator is right semi-Fredholm, and every operator that is bounded from below is left semi-Fredholm. The index of a semi-Fredholm operator is defined by 
    \[ \text{ind}(T) = \text{dim}\NN_T - \text{codim}\RR_T.\]
A semi-Fredholm operator with finite index is called a \emph{Fredholm operator}.\,Clearly, a bounded operator is Fredholm if and only if it is left and right semi-Fredholm. Another well-known characterization of Fredholmness is that an operator $T$ is Fredholm if and only if there exists another bounded operator $R$ for which $TR= I+ K_1\,\text{and}\,RT= I+ K_2$ where $K_1$ and $K_2$ are compact operators.\,This is also equivalent to the existence of a bounded operator $Q$ for which $TQ= I+ F_1\,\text{and}\,QT= I+ F_2$ where $F_1$ and $F_2$ are finite-rank operators.\,The operators $R$ and $Q$ above are called pseudo-inverses of $T$ and $T$ is said to be pseudo-invertible.\,Clearly, pseudo-inverses are not unique: adding a non-zero compact operator to a pseudo-inverse gives another pseudo-inverse.\,Similarly,\,left semi-Fredholmness is equivalent to left pseudo-invertibility, while right semi-Fredholmness is equivalent to right pseudo-invertibility.



\subsection{Near-Riesz bases} A frame $\{f_n\}_{n=1}^{\infty}$ in $\HH$ is called a \textit{near-Riesz basis}, if there is a finite set $\sigma\subseteq \N$ such that $\{f_n\}_{n \notin \sigma}$ is a Riesz basis for $\HH$. In \cite{holub1994bases} Holub proved that a frame $\{f_n\}_{n=1}^{\infty}$ is a near-Riesz basis if and only if its synthesis operator $S$ is surjective and with finite dimensional kernel. Moreover, he showed that the number of removed elements in $\sigma$ is equal to the dimension of the nullspace of $S$. 

More recently, Heil and Yu \cite{heil2023} established the following equivalence relation between near-Riesz bases and their alternative duals.
\begin{theorem}[\cite{heil2023}]
Let $\{f_n\}_{n=1}^{\infty}$ be a frame for $\HH$. Then the following statements are equivalent.
\begin{itemize}
    \item $\{f_n\}_{n=1}^{\infty}$ is a near-Riesz basis for $\HH$.
    \item Every alternative dual of $\{f_n\}$ is a frame.
    \item Every alternative dual of $\{f_n\}$ is Bessel.
\end{itemize}
\end{theorem}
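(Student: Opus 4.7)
The plan is to establish the cycle $(1) \Rightarrow (2) \Rightarrow (3) \Rightarrow (1)$; the implication $(2) \Rightarrow (3)$ is immediate since every frame is Bessel by definition.

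For $(1) \Rightarrow (2)$, fix a finite $\sigma \subseteq \N$ such that $\{f_n\}_{n \notin \sigma}$ is a Riesz basis with biorthogonal sequence $\{f_m^*\}_{m \notin \sigma}$. Given any alternative dual $\{g_n\}$, expand each $f_n$ with $n \in \sigma$ in the Riesz basis $\{f_m\}_{m \notin \sigma}$ and substitute into $f = \sum_n \langle f, g_n \rangle f_n$; equating coefficients against $\{f_m\}_{m \notin \sigma}$, which is permissible by uniqueness of Riesz-basis expansions, gives
\[
\langle f, g_m \rangle = \langle f, f_m^* \rangle - \sum_{n \in \sigma} \langle f_n, f_m^* \rangle\, \langle f, g_n \rangle \qquad (m \notin \sigma).
\]
Applying the triangle inequality in $l^2_m$, using Besselity of $\{f_m^*\}$ and the summability $\sum_{m \notin \sigma} |\langle f_n, f_m^* \rangle|^2 < \infty$ for each $n \in \sigma$ (again by Besselity of $\{f_m^*\}$), shows that $\{g_n\}$ is Bessel. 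For the lower frame bound, apply Cauchy--Schwarz to $\|f\|^2 = \sum_n \langle f, g_n \rangle \overline{\langle f, f_n \rangle}$ together with the upper frame bound $B$ of $\{f_n\}$ to obtain $\|f\|^2 / B \leq \sum_n |\langle f, g_n \rangle|^2$. Hence $\{g_n\}$ is a frame.

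For $(3) \Rightarrow (1)$, I argue the contrapositive: assuming $\ker S$ is infinite-dimensional, I construct an alternative dual that is not Bessel. The strategy is to produce a scalar sequence $u = (u_n)$ with $u \notin l^2$ but $\sum_{n=1}^{N} u_n f_n \to 0$ in $\HH$; granting such $u$, choose any nonzero $\phi \in \HH$ and set $h_n := \overline{u_n}\,\phi$ and $g_n := F^{-1} f_n + h_n$. Then the identity $\sum_{n=1}^{N} \langle f, h_n \rangle f_n = \langle f, \phi \rangle \sum_{n=1}^{N} u_n f_n \to 0$ makes $\{g_n\}$ an alternative dual, yet $\sum_n |\langle \phi, h_n \rangle|^2 = \|\phi\|^4 \sum |u_n|^2 = \infty$ shows $\{g_n\}$ is not Bessel. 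To produce $u$, observe that infinite-dimensionality of $\ker S$ implies that for any finite $F \subseteq \N$ the subspace $\{v \in \ker S : v|_F = 0\}$ is infinite-dimensional, since projection onto $l^2(F)$ has finite-dimensional range; iterating allows extraction of normalized $v^{(k)} \in \ker S$ with essentially disjoint supports on intervals $[N_k, N_{k+1}]$. Then $u := \sum_k a_k v^{(k)}$ with $a_k \to 0$ and $\sum a_k^2 = \infty$ (e.g.\ $a_k = 1/\sqrt{k \log k}$) satisfies $\|u\|_{l^2}^2 = \sum a_k^2 = \infty$, while within each block the partial sum is bounded in norm by $|a_k|\, B^{1/2} \to 0$ via the Bessel bound $B$ of $\{f_n\}$.

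The main obstacle lies in the general construction of $u$: when $\ker S$ contains no elements with disjoint finite supports, one must instead work with orthonormal $v^{(k)} \in \ker S$, which exist, converge weakly to $0$, and hence satisfy $v^{(k)}_n \to 0$ for each $n$, and then arrange approximate disjointness via a tail-truncation argument. Controlling the cross-block tail errors, using the Bessel bound on $\{f_n\}$ together with the coordinate-wise decay of $v^{(k)}$, is the delicate technical heart of the proof.
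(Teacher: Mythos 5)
First, a point of reference: the paper does not prove this statement; it is imported verbatim from Heil and Yu \cite{heil2023}, so there is no in-paper argument to compare yours against, and I can only assess your proposal on its own terms. Your implications $(1)\Rightarrow(2)\Rightarrow(3)$ are correct. The coefficient identity $\langle f,g_m\rangle=\langle f,f_m^*\rangle-\sum_{n\in\sigma}\langle f_n,f_m^*\rangle\langle f,g_n\rangle$ for $m\notin\sigma$ is legitimate (both expansions of $f$ in the Riesz basis $\{f_m\}_{m\notin\sigma}$ converge, and such expansions are unique), the $l^2$ triangle inequality plus Besselness of the biorthogonal system gives the upper bound, and the Cauchy--Schwarz step $\|f\|^2=\sum_n\langle f,g_n\rangle\overline{\langle f,f_n\rangle}\le\bigl(\sum_n|\langle f,g_n\rangle|^2\bigr)^{1/2}B^{1/2}\|f\|$ gives the lower frame bound $1/B$. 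The overall shape of $(3)\Rightarrow(1)$ is also right: for a frame, failure to be a near-Riesz basis is exactly $\dim\NN_S=\infty$ (Holub), and a sequence $u\notin l^2(\N)$ whose partial sums $\sum_{n=1}^Nu_nf_n$ tend to $0$ does yield the non-Bessel alternative dual $g_n=F^{-1}f_n+\overline{u_n}\,\phi$.

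The genuine gap is exactly where you flag it: the construction of $u$, which is the substantive content of the hard direction, is left unresolved. Both naive versions of your block construction fail as stated. If you truncate kernel vectors to obtain genuinely disjointly supported finite blocks $w^{(k)}$, then $Sw^{(k)}\neq0$, and the partial sums along block boundaries converge to $\sum_ka_kSw^{(k)}$, which is generally a nonzero vector; then $\{g_n\}$ is not a dual at all. If instead you keep full kernel vectors $v^{(k)}\in\NN_S$ supported in $[N_k,\infty)$, the overlapping tails make both $u\notin l^2$ and $\sum_{n\le N}u_nf_n\to0$ nontrivial (your appeal to weak convergence and coordinatewise decay does not by itself control either). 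The argument can be closed as follows: pick unit vectors $v^{(k)}\in\NN_S$ vanishing on $[1,N_k)$, choose $M_k$ with $\bigl\|v^{(k)}|_{(M_k,\infty)}\bigr\|\le4^{-k}$ and $N_{k+1}>M_k$, and split $v^{(k)}=w^{(k)}+r^{(k)}$ with $w^{(k)}$ supported in $[N_k,M_k]$. With $a_k\downarrow0$, $\sum a_k^2=\infty$, the sum $\sum_ka_kw^{(k)}$ has pairwise disjoint supports and infinite $l^2$-norm while $\sum_ka_kr^{(k)}$ converges in $l^2$, so $u:=\sum_ka_kv^{(k)}\notin l^2$; and since $Sv^{(k)}=0$, one has $\sum_{n=1}^Nu_nf_n=-\sum_{k\le K(N)}a_kS\bigl(v^{(k)}|_{(N,\infty)}\bigr)$, whose $k=K(N)$ term is $O(a_{K(N)})\to0$ and whose remaining terms tend to $0$ by dominated convergence (each is bounded by $\sqrt{B}\,a_k4^{-k}$ uniformly in $N$ and tends to $0$ for fixed $k$). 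Without some such quantitative tail control, $(3)\Rightarrow(1)$ is not established.
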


\section{Pseudo-frames}
\begin{definition}
A Bessel sequence in a Hilbert space $\HH$ is called a pseudo-frame if it becomes a frame after adding finitely many appropriate vectors to it.
\end{definition}

The following proposition characterizes pseudo-frames in terms of their synthesis operator. 

\begin{proposition} A Bessel sequence $\{f_n\}$ is a pseudo-frame if and only if its synthesis operator $S$ has a closed range $\RR_S$ and $\text{codim}(\RR_S)< \infty$, or equivalently $S$ is right semi-Fredholm.
\end{proposition}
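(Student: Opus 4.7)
The plan is to pass between the synthesis operator of $\{f_n\}$ and that of a finite augmentation $\{f_n\}\cup\{g_1,\dots,g_k\}$, recognize the passage as a finite-rank (hence compact) perturbation, and exploit the stability of right semi-Fredholmness under compact perturbations together with the elementary fact that a surjective operator is trivially right semi-Fredholm.

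For the forward implication, I would assume $\{f_n\}\cup\{g_1,\dots,g_k\}$ is a frame and work with its synthesis operator $\tilde S:\ell^2(\N)\oplus\C^k\to\HH$ given by $\tilde S(c,d)=Sc+\sum_{i=1}^k d_i g_i$. Since the augmented sequence is a frame, $\tilde S$ is surjective, hence right semi-Fredholm. I would then decompose $\tilde S=\hat S+T$, where $\hat S(c,d)=Sc$ satisfies $\RR_{\hat S}=\RR_S$ and $T(c,d)=\sum_i d_i g_i$ is of finite rank. Since right semi-Fredholmness is preserved under compact (in particular, finite-rank) perturbations, $\hat S$ is right semi-Fredholm, whence $\RR_S$ is closed and of finite codimension.

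For the converse, I would suppose $\RR_S$ is closed with $\mathrm{codim}(\RR_S)=k<\infty$ and pick an orthonormal basis $\{g_1,\dots,g_k\}$ of $\RR_S^\perp$. Adjoining finitely many vectors clearly preserves the Bessel property, and the synthesis operator of the enlarged sequence sends $(c,d)$ to $Sc+\sum d_i g_i\in\RR_S\oplus\RR_S^\perp=\HH$; being surjective, it corresponds to a frame, so $\{f_n\}$ is a pseudo-frame by definition.

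The only step I expect to require care is the compact-perturbation stability of right semi-Fredholmness invoked in the forward direction. To keep the argument self-contained, I would derive it directly from the right-pseudo-invertibility characterization recorded in the preliminaries: writing a right pseudo-inverse of $\tilde S$ as $R=(R_1,R_2)$ with $\tilde S R=I+K$ for a compact $K$, one reads off $SR_1=I+K-\sum_i (R_2\,\cdot)_i g_i$, and since the last summand is finite rank, $SR_1=I+K'$ with $K'$ compact. Hence $S$ itself is right pseudo-invertible and therefore right semi-Fredholm, closing the argument.
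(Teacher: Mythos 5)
Your proposal is correct and follows essentially the same route as the paper: in the forward direction both arguments realize the synthesis operator of the augmented frame as a finite-rank perturbation of (a copy of) $S$ and invoke stability of right semi-Fredholmness, and in the converse both adjoin an orthonormal basis of $\RR_S^\perp$ and verify surjectivity of the enlarged synthesis operator directly. The only cosmetic differences are that you phrase the augmentation via $\ell^2(\N)\oplus\C^k$ where the paper uses forward/backward shifts, and that you make the compact-perturbation step explicit through the right pseudo-inverse characterization, which the paper leaves as a standard fact.
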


\begin{proof} Let $\{f_n\}$ be a pseudo-frame.There exists a finite set of vectors $\{g_1, g_2,\dots, g_m\}$ such that $\{g_1, g_2,\dots, g_m, f_1, f_2, \dots\}$ is a frame. The synthesis operator $S_1$ of this frame can be written as $S_1=SB^m+G$, where $B:l^2(\N)\to l^2(\N)$ is the backward shift operator, and $G:l^2(\N)\to \HH$ is defined by $Gc=\sum_{k=1}^m c_k g_k$. $S_1$ being surjective and $G$ being finite rank operator imply that $SB^m=S_1-G$ has a closed finite codimensional range. Denoting by $F: l^2(\N)\to l^2(\N)$ the forward shift operator, we clearly have $S=SB^mF^m$. Thus, $S$ also has closed finite codimensional range.

Suppose now that the synthesis operator $S$ has a closed finite co-dimensional range. Let $\{g_1, g_2,\dots, g_m\}$ be an orthonormal basis of $\RR_S^\perp$. Just as in the other direction, the synthesis operator of the extended sequence $\{g_1, g_2,\dots, g_m, f_1, f_2, \dots\}$ is given by $S_1=SB^m+G$, where again $B:l^2(\N)\to l^2(\N)$ denotes the backward shift operator, and $G:l^2(\N)\to \HH$ is defined by $Gc=\sum_{k=1}^m c_k g_k$. It is enough to show that $\{g_1, g_2,\dots, g_m, f_1, f_2, \dots\}$ is a frame, i.e., $S_1$ is surjective. Let $h\in\HH$ be arbitrary. It has a unique decomposition $h=h_1+h_2$, where $h_1\in \RR_S$ and $h_2\in \RR_S^\perp$. Since $S$ is surjective, there exists $c\in l^2(\N)$ such that $Sc=h_1$. Then $SB^m(F^mc)=h_1$. Also, since $\{g_1, g_2,\dots, g_m\}$ is a basis of $\RR_S^\perp$ there exists $d\in l^2(\N)$ such that $Gd=h_2$. In addition, we can pick $d$ such that $d_k=0$ for all $k\geq m+1$. Combining these we obtain \[S_1(F^mc+d)=SB^mF^mc+SB^md+GF^mc+Gd=Sc+S0+0+Gd=h_1+h_2=h.\] Thus, $S_1$ is surjective.

\end{proof}

We say that a sequence $\{g_n\}$ is a \emph{psuedo-dual sequence} of the Bessel sequence $\{f_n\}$ if 
the following ``pseudo-reconstruction'' formula holds: $$f= \sum_{n=1}^{\infty} \langle f,\,g_n\rangle f_n$$ for every $f$ belonging to some finite codimensional subspace of $\HH$. In the case when $\{g_n\}$ is Bessel we can write this relation as $S_fS^*_g= I+ K$\,where $K$ is a finite-rank operator. This relation implies that $S_f$ is right semi-Fredholm,\,i.\,e., $\{f_n\}$ is a pseudo-frame. Therefore, for a Bessel sequence to have a Bessel pseudo-dual it must be a pseudo-frame. 

We now show that every pseudo-frame has at least one Bessel pseudo-dual sequence. Indeed, consider the frame operator $F=SS^*$. This operator is pseudo-invertible in the sense that there exists a Fredholm operator $R: \HH\to \HH$ such that $FR=I+K$ for some finite-rank operator $K$. It is easy to see that the sequence $g_n:=R^*f_n, n=1,2,\dots $ is a Bessel sequence which is a pseudo-dual sequence for $\{f_n\}$. It can be easily shown that this pseudo-dual sequence is a pseudo-frame itself.

A pseudo-frame may have many different pseudo-dual sequences. Even the one that we just explicitly constructed cannot be termed canonical due to the non-uniqueness of the pseudo-inverses. However, some of the pseudo-duals may even fail to be Bessel sequences. This can happen even when the pseudo-frame is a frame. One such example is given in~\cite{heil2023}.  
In conclusion, for a Bessel sequence $\{f_n\}$ to have a Bessel pseudo-dual it is necessary for it to be a pseudo-frame, in which case every of its Bessel pseudo-duals is a pseudo-frame itself.

\begin{proposition}
    Every pseudo-dual $\{g_n\}$ of a pseudo-frame $\{f_n\}$ which is Bessel is a pseudo-frame itself.
\end{proposition}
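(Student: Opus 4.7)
The plan is to translate the pseudo-dual condition into an operator identity, take adjoints, and apply the characterization of pseudo-frames established in the previous proposition (right semi-Fredholmness of the synthesis operator).

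Let $S_f$ and $S_g$ denote the synthesis operators of $\{f_n\}$ and $\{g_n\}$ respectively. Since both sequences are Bessel, these operators are bounded. As noted in the paragraph preceding the proposition, the pseudo-reconstruction formula, combined with the Besselness of $\{g_n\}$, is equivalent to the operator identity
\[
S_f S_g^{*} = I + K,
\]
where $K$ is a finite-rank operator on $\HH$. This is the starting point.

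Taking adjoints I would obtain $S_g S_f^{*} = I + K^{*}$, where $K^{*}$ is again finite-rank, and in particular compact. Hence $S_g S_f^{*}$ is a compact perturbation of the identity, so it is Fredholm; in particular its range is closed and of finite codimension in $\HH$. Since $\RR_{S_g S_f^{*}} \subseteq \RR_{S_g}$, the range of $S_g$ contains a closed finite-codimensional subspace of $\HH$, which forces $\RR_{S_g}$ itself to be closed and of finite codimension (a subspace containing a closed finite-codimensional subspace is the sum of a closed subspace and a finite-dimensional subspace, hence closed). Therefore $S_g$ is right semi-Fredholm, and the proposition characterizing pseudo-frames yields that $\{g_n\}$ is a pseudo-frame.

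There is no real obstacle here: once the pseudo-reconstruction formula has been rewritten as $S_f S_g^{*} = I + K$, the result is essentially an exercise in adjoints and elementary Fredholm theory, reducing to the previous proposition. The only point requiring a brief justification is that a subspace of $\HH$ containing a closed finite-codimensional subspace is itself closed, which follows from the standard fact that the algebraic sum of a closed subspace and a finite-dimensional subspace is closed.
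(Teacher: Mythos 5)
Your proof is correct and follows essentially the same route as the paper: both start from the operator identity $S_fS_g^*=I+K$ with $K$ finite-rank and conclude that $S_g$ is right semi-Fredholm, hence that $\{g_n\}$ is a pseudo-frame by the earlier characterization. The only cosmetic difference is that the paper deduces directly that $S_g^*$ is left semi-Fredholm (it has the left pseudo-inverse obtained by composing a pseudo-inverse of $I+K$ with $S_f$), whereas you take adjoints and run an explicit range-containment argument; both are valid and equally short.
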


\begin{proof} Since the pseudo-dual sequence $\{g_n\}$ is Bessel we can write the pseudo-dual relation as $S_fS^*_g= I+ F$, where $F$ is a finite-rank operator. Therefore, $S_g^*$ is left semi-Fredholm, and hence $S_g$ is right semi-Fredholm. Thus, $\{g_n\}$ is a pseudo-frame.
    
\end{proof}

\section{Pseudo-Riesz sequences}
\begin{definition}
We say that a Bessel sequence in a Hilbert space is a pseudo-Riesz sequence if it can be transformed into a Riesz sequence by removing a finite number of appropriate vectors.
\end{definition}

Obviously, the set of vectors that need to be removed is not unique.\,The only thing that is uniquely determined is the minimal number of vectors that need to be removed. We call this number the \emph{excess} of the pseudo-Riesz sequence. 

The following proposition characterizes pseudo-Riesz sequences in terms of their synthesis operator. 

\begin{proposition} A Bessel sequence is a pseudo-Riesz sequence if and only if its associated synthesis operator $S$ has a closed range $\RR_S$ and a finite dimensional nullspace $\NN_S$. Moreover, the excess of the pseudo-Riesz sequence is equal to $\text{dim}\NN_S$.   
\end{proposition}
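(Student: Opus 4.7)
The plan is to reduce both directions to standard facts about semi-Fredholm operators and finite-rank perturbations, in the same spirit as the proof given for pseudo-frames.

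For the forward direction, suppose $\{f_n\}$ is pseudo-Riesz and pick a finite $\sigma\subset \N$ such that $\{f_n\}_{n\notin \sigma}$ is a Riesz sequence. Decompose $l^2(\N)=l^2(\sigma)\oplus l^2(\N\setminus\sigma)$ with the coordinate projections $P_1,P_2$, and write $Sc=S_R(P_2 c)+F(P_1 c)$, where $S_R$ is the synthesis operator of the Riesz subsequence (bounded below on $l^2(\N\setminus\sigma)$) and $F:l^2(\sigma)\to \HH$ is finite-rank. The operator $S_R P_2$ has closed range $\RR_{S_R}$ and nullspace $l^2(\sigma)$, so it is left semi-Fredholm; adding the finite-rank term $FP_1$ preserves left semi-Fredholmness, so $S$ has closed range and finite-dimensional nullspace.

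For the converse, let $k=\dim\NN_S$ and $P$ denote the orthogonal projection onto $\NN_S$. Since $\{e_n\}$ is total in $l^2(\N)$, the family $\{Pe_n\}$ spans $\NN_S$, and I can extract indices $n_1,\dots,n_k$ for which $v_i:=Pe_{n_i}$ form a basis of $\NN_S$. Set $\sigma=\{n_1,\dots,n_k\}$; the claim is that $S$ restricted to $l^2(\N\setminus\sigma)$ is bounded below. For injectivity, any $c\in\NN_S\cap l^2(\N\setminus \sigma)$ can be written as $c=\sum_i \alpha_i v_i$, and the conditions $\langle c,e_{n_j}\rangle=0$ become $\sum_i \alpha_i\langle v_i,v_j\rangle=0$ (using that $v_i\in\NN_S$ is orthogonal to $(I-P)e_{n_j}$), i.e., $M\alpha=0$ where $M$ is the invertible Gram matrix of $\{v_i\}$; hence $c=0$. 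For the lower bound itself, note that $l^2(\N\setminus\sigma)+\NN_S$ is closed (a closed subspace plus a finite-dimensional one), so by the open mapping theorem the quotient map $l^2(\N\setminus\sigma)\to l^2(\N)/\NN_S$ is bounded below, which translates to $d(c,\NN_S)\geq c'\|c\|$ for $c\in l^2(\N\setminus\sigma)$. Combining this with the inequality $\|Sc\|\geq m\, d(c,\NN_S)$ (valid for all $c\in l^2(\N)$ because $S$ has closed range, as recorded in the preliminaries) yields $\|Sc\|\geq m c'\|c\|$ on $l^2(\N\setminus\sigma)$, as desired.

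Finally, the excess claim splits into two halves. The construction above shows the excess is at most $k=\dim \NN_S$. Conversely, if $\sigma'\subset\N$ has $|\sigma'|<k$, then by dimension counting $\dim(\NN_S\cap l^2(\N\setminus\sigma'))\geq k-|\sigma'|>0$, so $S$ restricted to $l^2(\N\setminus\sigma')$ has nontrivial kernel and $\{f_n\}_{n\notin \sigma'}$ cannot be a Riesz sequence. The main delicate point in the argument is the bounded-below (equivalently, closed range) statement for the restricted synthesis operator; once one recognizes that $l^2(\N\setminus\sigma)+\NN_S$ is closed, everything else is a direct translation of the semi-Fredholm toolkit already used for pseudo-frames.
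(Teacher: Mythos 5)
Your proof is correct, and in the converse direction it takes a genuinely different route from the paper. The forward direction is essentially the paper's argument in different clothing: the paper writes the synthesis operator of the Riesz subsequence as $S_1=SF^m$ using shift operators where you use the coordinate decomposition $l^2(\sigma)\oplus l^2(\N\setminus\sigma)$ and a finite-rank perturbation; both reduce to stability of left semi-Fredholmness under finite-rank perturbations. The real divergence is in the converse. The paper passes to the equivalent sequence $\{Pe_n\}$ (with $P$ the projection onto $\NN_S^\perp$), uses that $I-P$ is Hilbert--Schmidt to find a tail with $\sum_{n>m}\norm{e_n-Pe_n}^2<1$, and invokes the Paley--Wiener perturbation theorem; this produces \emph{some} finite removal set but gives no control on its size, so the excess statement must then be recovered by feeding the result back through the forward direction. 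You instead choose the removal set explicitly --- indices $n_1,\dots,n_k$ whose projections onto $\NN_S$ form a basis of that nullspace --- and prove directly that $S$ is bounded below on $l^2(\N\setminus\sigma)$ via the closedness of $l^2(\N\setminus\sigma)+\NN_S$ (closed plus finite-dimensional) and the resulting quotient estimate $d(c,\NN_S)\geq c'\norm{c}$, combined with $\norm{Sc}\geq\gamma\,d(c,\NN_S)$ from the closed-range hypothesis. This buys you a removal set of the minimal cardinality $k=\dim\NN_S$ at once, and together with your dimension-counting lower bound it yields a self-contained proof of the excess claim, which the paper's converse, read in isolation, does not supply. The price is a slightly more delicate functional-analytic step (the closed-sum fact and the open mapping theorem) in place of an off-the-shelf perturbation theorem; the Gram-matrix argument for $\NN_S\cap l^2(\N\setminus\sigma)=\{0\}$ and the remaining steps all check out.
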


\begin{proof} Suppose $\{f_n\}$ is a pseudo-Riesz sequence. Let $m$ be the smallest number of vectors that need to be removed to obtain a Riesz sequence. Since rearranging terms doesn't change the property of being a pseudo-Riesz sequence, without loss of generality, we can assume that $\{f_{m+1}, f_{m+2},\dots\}$ is a Riesz sequence. The synthesis operator $S_1$ of this Riesz sequence can be written as $S_1=SF^m$, where as before we denote by $F$ the forward shift operator on $l^2(\N)$. Since $S_1$ has a closed range, we have that $S$ must also have a closed range. In addition, $f\in \NN_{S}$ if and only if $S_1B^m f=SF^mB^mf=0$, where $B$ denotes, as before, the backward shift on $l^2(\N)$.\,Therefore, due to the injectivity of $S_1$, we deduce that the nullspace of $S$ must be $m$-dimensional. 

Suppose now that the synthesis operator $S$ of $\{f_n\}$ is injective with finite dimensional nullspace.\,Let $P: l^2(\N)\to \NN_S^\perp$ be the orthogonal projection onto $\NN_S^\perp$.\,It is easy to check that $SPe_n=Se_n=f_n$ for every element $e_n\in l^2(\N)$ of the standard orthonormal basis of $l^2(\N)$.\,Combining this with the fact that $S: \NN_{S}^\perp\to \RR_S$ is an isomorphism we obtain that $\{f_n\}$ is a pseudo-Riesz sequence if and only if $\{Pe_n\}$ is. \,Clearly, $I-P$ is finite-rank and therefore Hilbert-Schmidt.\,Therefore, $\sum_n\norm{e_n-Pe_n}^2<\infty$.\,Consequently,\,there exists $m\in \N$ such that \[\sum_{n=m+1}^\infty \norm{e_n-Pe_n}^2<1.\] By the classical Paley-Wiener perturbation result, we obtain that the sequence $\{e_1, e_2,\dots, e_m, Pe_{m+1}, Pe_{m+2}, \dots\}$ is a Riesz basis, and hence $\{Pe_{m+1}, Pe_{m+2}, \dots\}$ is a Riesz sequence.\,Thus, the whole sequence $\{Pe_n\}$ is a pseudo-Riesz sequence. 
\end{proof}

We say that a sequence $\{g_n\}$ is a \emph{psuedo-codual} sequence of the Bessel sequence $\{f_n\}$ if 
\[\sum_{k=1}^\infty\ip{g_n}{f_k}c_k=c_n,\] for all $c=(c_k)\in l^2(\N)$ belonging to some finite codimensional subspace of $l^2(\N)$. Note that if $\{g_n\}$ is biorthogonal to $\{f_n\}$, then it is also a pseudo-codual sequence to $\{f_n\}$. The converse is not true. 

In the case when the codual sequence $\{g_n\}$ is Bessel we can write this relation as $S_g^*S_f= I+ K$\,where $K:l^2(\N)\to l^2(\N)$ is a finite rank operator.\,This relation implies that $S_f$ has a closed range and a finite dimensional nullspace,\,i.e.,\,$\{f_n\}$ is a pseudo-Riesz sequence.\,Therefore,\,for a Bessel sequence to have a Bessel pseudo-codual,\,it must be a pseudo-Riesz sequence. 

On the other hand, every pseudo-Riesz sequence has at least one Bessel pseudo-codual sequence. Indeed, consider the Grammian operator $G=S^*S$, where $S$ is the synthesis operator of our pseudo-Riesz sequence. The operator $G$ is Fredholm, i.e., pseudo-invertible. Therefore, there exists a Fredholm operator $R:l^2(\N)\to l^2(\N)$ such that $GR=I+K$ for some finite-rank operator $K$. Form the sequence $d_n:=Re_n, n=1,2,\dots $, where $\{e_n\}$ is the standard basis in $l^2(\N)$, and then define $g_n=\sum_k d_n^k f_k, n=1,2,\dots $. It is easy to check that 
$\{g_n\}$ is a Bessel sequence which is a pseudo-codual sequence for $\{f_n\}$. 

Furthermore, if a pseudo-codual sequence of some pseudo-Riesz sequence is Bessel, then it must be a pseudo-Riesz sequence itself. This follows just as above,\,by considering the equality satisfied by their synthesis operators.  

A pseudo-Riesz sequence may have many different pseudo-codual sequences. We just proved that at least one of these must be a pseudo-Riesz sequence. However, some of these pseudo-codual sequences could even be non-Bessel, even when the pseudo-Riesz sequence is a Riesz sequence. Indeed, consider the Riesz sequence $\{f_n\}$ obtained by dropping the even order vectors from some orthonormal basis $\{e_n\}$,\,i.e., $f_n=e_{2n-1}$,\,$n\in \N$. Then, the sequence $g_n=e_{2n-1}+ne_{2n}$ is  biorthogonal (and hence pseudo-codual) to $\{f_n\}$, and is clearly not Bessel. 
 
To summarize, for a Bessel sequence $\{f_n\}$ to have a Bessel pseudo-codual it is necessary for it to be a pseudo-Riesz sequence, in which case every of its Bessel pseudo-coduals is a pseudo-Riesz sequence. However, some pseudo-Riesz sequences could have non-Bessel pseudo-coduals. 

\begin{proposition}
    Every pseudo-codual $\{g_n\}$ of a pseudo-Riesz sequence $\{f_n\}$ which is a Bessel sequence is a pseudo-Riesz sequence itself.
\end{proposition}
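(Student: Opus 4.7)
The plan is to mimic the short argument used for Proposition 3.3 (the analogous statement for pseudo-frames) and combine it with the synthesis-operator characterization of pseudo-Riesz sequences given in Proposition 4.2.

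First, I would unpack the hypothesis. Since $\{g_n\}$ is Bessel, its synthesis operator $S_g$ is bounded, and hence so is $S_g^*$. Writing out the pseudo-codual relation
\[\sum_{k=1}^{\infty}\ip{g_n}{f_k}c_k = c_n\]
for all $c=(c_k)$ in a finite codimensional subspace of $l^2(\N)$ exactly says that the operator $S_g^* S_f - I$ vanishes on a finite codimensional subspace of $l^2(\N)$, i.e.\ $S_g^* S_f = I + K$ for some finite rank operator $K$ on $l^2(\N)$. This is the same reformulation the authors already use at the beginning of the section.

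Next, I would read off the Fredholm consequences of this factorization. The operator $I+K$ on $l^2(\N)$ is Fredholm, so the product $S_g^* S_f$ is Fredholm. A standard observation about semi-Fredholm operators is that if $AB$ is Fredholm, then $A$ must be right semi-Fredholm (its range is closed and of finite codimension) and $B$ must be left semi-Fredholm. Applying this with $A=S_g^*$ and $B=S_f$, we conclude that $S_g^*$ is right semi-Fredholm: its range is closed and finite codimensional.

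Finally, I would dualize. Closedness of the range is preserved by taking adjoints, so $\RR_{S_g}$ is closed. Moreover, for an operator with closed range one has $\NN_{S_g}=\RR_{S_g^*}^{\perp}$, and hence
\[\dim \NN_{S_g} = \mathrm{codim}\, \RR_{S_g^*} < \infty.\]
Thus $S_g$ has closed range and finite dimensional nullspace, and by Proposition 4.2 this is exactly the characterization of $\{g_n\}$ being a pseudo-Riesz sequence.

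I do not expect any real obstacle here; the only point to double-check is the direction of the semi-Fredholm inheritance under composition, but this is the standard fact I used above. Everything else is a straightforward reuse of what the paper has already established.
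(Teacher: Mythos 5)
Your argument is correct and follows essentially the same route as the paper: rewrite the Bessel pseudo-codual relation as $S_g^*S_f = I + F$ with $F$ finite rank, deduce that $S_g^*$ is right semi-Fredholm, and pass to the adjoint to conclude $S_g$ is left semi-Fredholm, i.e.\ has closed range and finite dimensional nullspace. You simply spell out the composition and duality steps that the paper leaves implicit.
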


\begin{proof} Since the pseudo-codual sequence $\{g_n\}$ is Bessel we can write this pseudo-codual relation as $S^*_gS_f= I+ F$, where $F$ is a finite-rank operator. Therefore, $S_g^*$ is right semi-Fredholm, and hence $S_g$ is left semi-Fredholm. Thus, $\{g_n\}$ is a pseudo-Riesz sequence.
    
\end{proof}

\section{Pseudo- Riesz bases}
We now introduce the core concept of this paper.
 \begin{definition}
     A Bessel sequence $\{f_n\}_{n=1}^{\infty}$ in a Hilbert space $\HH$ is called a \textit{pseudo-Riesz basis} if by removing finite number of vectors from it and adding a finite set of appropriate vectors to it we get a Riesz basis for $\HH$,i.e.,if there is a finite set $\sigma\subseteq \mathbb{N}$ and a suitable finite sequence of vectors $\{g_1, g_2, \dots, g_m \}$ in $\HH$ such that $\{f_n\}_{n \notin \sigma} \cup \{g_1, g_2, \dots, g_m\}$ is a Riesz basis for $\HH$.
 \end{definition}
Since adding finitely many terms to a frame sequence does not change its frame property,\,and removing finitely many vectors from a Riesz sequence does not change its Riesz sequence property,\,we have that every pseudo-Riesz basis is both a pseudo-frame and a pseudo-Riesz sequence.\,The converse is also true. 

\begin{proposition}
    A sequence of vectors is a pseudo-Riesz basis if and only if it is simultaneously a pseudo-frame and a pseudo-Riesz sequence.
\end{proposition}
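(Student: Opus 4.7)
My plan is to prove both implications separately, with the forward direction being essentially a reorganization of the definitions and the reverse direction requiring a short explicit construction based on the two characterizations already established.

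For the forward direction, assume $\{f_n\}$ is a pseudo-Riesz basis, so there exist a finite $\sigma\subseteq\N$ and vectors $g_1,\dots,g_m\in\HH$ such that $\{f_n\}_{n\notin\sigma}\cup\{g_1,\dots,g_m\}$ is a Riesz basis. Adding the finitely many vectors $\{f_n\}_{n\in\sigma}$ back to this Riesz basis gives $\{f_n\}\cup\{g_1,\dots,g_m\}$, which remains Bessel (union of two Bessel sequences) and inherits the lower frame bound; it is therefore a frame, so $\{f_n\}$ is a pseudo-frame. Simultaneously, $\{f_n\}_{n\notin\sigma}$ is a subsequence of a Riesz basis and is therefore a Riesz sequence, showing that $\{f_n\}$ is a pseudo-Riesz sequence.

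For the reverse direction, suppose $\{f_n\}$ is both a pseudo-frame and a pseudo-Riesz sequence. By Propositions 3.1 and 4.1 the synthesis operator $S$ has closed range with $\dim\NN_S<\infty$ and $\operatorname{codim}\RR_S<\infty$, i.e.\ $S$ is Fredholm. Invoking Proposition 4.1, pick a finite set $\sigma\subseteq\N$ such that $\{f_n\}_{n\notin\sigma}$ is a Riesz sequence, and set $V=\overline{\operatorname{span}}\,\{f_n\}_{n\notin\sigma}$. Since $\RR_S=V+\operatorname{span}\{f_n\}_{n\in\sigma}$, the subspace $V$ has codimension at most $|\sigma|$ in $\RR_S$; combined with the finite codimension of $\RR_S$ in $\HH$, this gives $\operatorname{codim}_{\HH}(V)<\infty$. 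Let $\{g_1,\dots,g_k\}$ be any orthonormal basis of $V^\perp$.

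The remaining step is to check that $\{f_n\}_{n\notin\sigma}\cup\{g_1,\dots,g_k\}$ is a Riesz basis for $\HH$, which then directly certifies that $\{f_n\}$ is a pseudo-Riesz basis. Because $V\perp V^\perp$, any finite linear combination splits orthogonally, so its squared norm equals $\bigl\|\sum_{n\notin\sigma}c_nf_n\bigr\|^2+\sum_j|d_j|^2$; applying the Riesz bounds of $\{f_n\}_{n\notin\sigma}$ in $V$ (and the trivial orthonormal bounds for the $g_j$) yields two-sided Riesz bounds for the extended sequence, while the identity $V\oplus V^\perp=\HH$ gives completeness. The only genuinely new ingredient is the observation that the pseudo-frame hypothesis forces $V$ to have finite codimension in $\HH$; every other step is bookkeeping and I expect no real obstacle.
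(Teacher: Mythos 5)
Your proof is correct, but your converse direction follows a genuinely different route from the paper's. The paper starts from the augmented frame $\{f_n\}\cup\{g_1,\dots,g_m\}$ and deletes the surplus vectors $\{g_1,\dots,g_m\}\cup\{f_n\}_{n\in\sigma}$ one at a time, invoking the classical dichotomy that removing a single vector from a frame leaves either a frame or an incomplete set; since the process terminates in the Riesz sequence $\{f_n\}_{n\notin\sigma}$, a Riesz basis must be reached at some (possibly final) stage. You instead work through the synthesis operator: the two characterizations give $\operatorname{codim}\RR_S<\infty$, hence the closed span $V$ of the Riesz subsequence $\{f_n\}_{n\notin\sigma}$ has finite codimension in $\HH$, and appending an orthonormal basis of $V^\perp$ yields a Riesz basis via the orthogonal splitting of norms. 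Your construction is more explicit---it names exactly which vectors are removed and which are added---and it relies only on the operator-theoretic characterizations already established; it in fact closely parallels an argument the authors kept only in a commented-out draft. The paper's removal argument is shorter and avoids operators entirely, though its parenthetical claim that incompleteness after deletion occurs ``if and only if the frame was a Riesz basis'' is stated more strongly than the standard deletion theorem warrants, a delicacy your version sidesteps. One small point worth making explicit: the identity $\RR_S=V+\operatorname{span}\{f_n\}_{n\in\sigma}$ uses that the sum of a closed subspace and a finite-dimensional subspace is closed, so that the closed range $\RR_S=\overline{\operatorname{span}}\{f_n\}$ really does decompose this way.
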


\begin{proof}
    Let $\{f_n\}$ be a sequence which is simultaneously a pseudo-frame and a pseudo-Riesz sequence. There exists a finite set of vectors $\{g_1, g_2,\dots, g_m\}$ such that $\{f_n\} \cup \{g_1, g_2, \dots, g_m\}$ is a frame, and there exists a finite set $\sigma\subseteq \mathbb{N}$  such that $\{f_n\}_{n \notin \sigma}$ is a Riesz sequence. 
    
    Now, starting from the frame $\{f_n\} \cup \{g_1, g_2, \dots, g_m\}$ we remove the vectors $\{g_1, g_2, \dots, g_m\}\cup \{f_n\}_{n\in\sigma}$ one by one to reach the Riesz sequence $\{f_n\}_{n \notin \sigma}$. This process will clearly be finished in finitely many steps. However, it is well-known that removing an element from a frame leave us either with a frame or with an incomplete set, and the later case happens if and only if the frame was a Riesz basis. Therefore, at some step (which might be the last one) in the removal process described above we must reach a Riesz basis.\,Thus, $\{f_n\}$ must be a pseudo-Riesz basis. 
\end{proof}

As an immediate consequence we obtain the following characterization of pseudo-Riesz bases.

\begin{proposition}
    A Bessel sequence is a pseudo-Riesz basis if and only if its associated synthesis operator is Fredholm. 
\end{proposition}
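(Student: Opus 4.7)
The plan is to chain together three facts that have already been established in the excerpt, so the proof will be essentially a one-line consequence rather than a fresh argument. First I would invoke the preceding proposition, which asserts that a Bessel sequence $\{f_n\}$ is a pseudo-Riesz basis if and only if it is simultaneously a pseudo-frame and a pseudo-Riesz sequence. This reduces the statement to characterizing the intersection of those two classes via the synthesis operator.

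Next I would apply the two earlier characterizations from Sections 3 and 4. The pseudo-frame proposition says $\{f_n\}$ is a pseudo-frame iff $S$ has closed range of finite codimension, i.e., $S$ is right semi-Fredholm. The pseudo-Riesz sequence proposition says $\{f_n\}$ is a pseudo-Riesz sequence iff $S$ has closed range and finite-dimensional nullspace, i.e., $S$ is left semi-Fredholm. Combining these, $\{f_n\}$ is both a pseudo-frame and a pseudo-Riesz sequence iff $S$ is simultaneously left and right semi-Fredholm.

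Finally, I would appeal to the remark made in the Fredholm operators subsection of the preliminaries: a bounded operator is Fredholm if and only if it is both left and right semi-Fredholm. Threading these equivalences together gives the stated characterization.

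The main obstacle, if any, is bookkeeping: one must make sure the "closed range" requirement in the definition of semi-Fredholm is handled consistently (both the pseudo-frame and pseudo-Riesz sequence characterizations already include closed range as part of the hypothesis, so no extra work is needed there). Since all substantive content has been discharged by earlier propositions, no new technical estimate is required, and the proof reduces to citing the previous proposition and the Fredholm characterization.
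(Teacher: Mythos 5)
Your proposal is correct and matches the paper exactly: the authors state this proposition as ``an immediate consequence'' of the preceding one, relying on precisely the chain you describe --- pseudo-Riesz basis iff pseudo-frame and pseudo-Riesz sequence, combined with the right and left semi-Fredholm characterizations of those two classes and the fact that Fredholm means both. Nothing further is needed.
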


Recall that near-Riesz bases of Holub are defined as frames which can be transformed into a Riesz basis by removing finitely many appropriate vectors. They connect to our concepts in the following simple way. 

\begin{proposition}
    A sequence of vectors is a near-Riesz basis if and only if it is simultaneously a frame and a pseudo-Riesz sequence.
\end{proposition}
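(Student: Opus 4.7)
The plan is to prove this via the synthesis-operator characterizations already established in the paper. The forward direction is essentially immediate from the definitions, so the content lies in extracting the backward direction from Holub's theorem quoted in the preliminaries.

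For the forward direction, suppose $\{f_n\}$ is a near-Riesz basis. Then by Holub's definition it is already a frame, and removing the finite set $\sigma$ of Riesz-defect vectors leaves a Riesz basis, which in particular is a Riesz sequence. Hence $\{f_n\}$ is also a pseudo-Riesz sequence. This half requires no new work beyond unwinding definitions.

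For the backward direction, assume $\{f_n\}$ is both a frame and a pseudo-Riesz sequence. Being a frame means the synthesis operator $S$ is surjective. Being a pseudo-Riesz sequence, by the earlier characterization proposition, means $S$ has closed range and finite-dimensional nullspace. So $S$ is surjective with $\dim \NN_S < \infty$, which is exactly Holub's synthesis-operator characterization of near-Riesz bases recalled in Section~2.6. Therefore $\{f_n\}$ is a near-Riesz basis. Alternatively, one can route through the preceding proposition: a frame is trivially a pseudo-frame, so frame $+$ pseudo-Riesz sequence $\Rightarrow$ pseudo-Riesz basis $\Rightarrow$ $S$ is Fredholm; combined with surjectivity from the frame property, Holub's theorem applies.

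There is no real obstacle here; the statement is essentially a bookkeeping corollary once one has the pseudo-Riesz sequence characterization and Holub's classical result in hand. The only point worth being careful about is making explicit that ``frame'' forces surjectivity of $S$ (which is recorded in Section~2.4 of the preliminaries) so that the Fredholmness coming from the pseudo-Riesz sequence hypothesis collapses to finite-dimensional kernel plus surjectivity, matching Holub exactly.
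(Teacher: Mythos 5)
Your proof is correct, and since the paper states this proposition without proof (presenting it as an immediate consequence of the definitions together with Holub's characterization), your synthesis-operator argument is exactly the intended bookkeeping: frame $\Rightarrow$ $S$ surjective, pseudo-Riesz sequence $\Rightarrow$ $\dim\NN_S<\infty$, and Holub's theorem closes the loop. No gaps.
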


Heil and Yu in their paper \cite{heil2023} proved that $\{f_n\}$ is a near-Riesz basis if and only if every alternative dual of $\{f_n\}$ is a Bessel sequence. Unfortunately, there is no analog for this result for pseudo-Riesz bases, even for Riesz sequences which are pseudo-frames. The following example shows that not every alternative pseudo-dual and alternative pseudo-codual of a pseudo-Riesz basis is Bessel.\\

Let $\{h_1, h_2,h_3,\cdots\}$ be a Riesz basis for $\HH$ and let $\{\tilde{h}_1, \tilde{h}_2, \tilde{h}_3,\cdots\}$ be the Riesz basis biorthogonal to $\{h_n\}$. Consider $f_i= h_{m+i},\,i \geq 1$. The sequence $\{f_n\}_{n=1}^{\infty}$ is a Riesz sequence which can be turned into a Riesz basis by adding finitely many terms. So it is a pseudo-Riesz sequence and a pseudo-frame, i. e., a pseudo-Riesz basis. Consider $g_i= \tilde{h}_1+ \tilde{h}_{m+i}$. We know that every biorthogonal sequence of $\{f_n\}$ is also a pseudo-codual of $\{f_n\}$. We next show that $\{g_n\}$ is biorthogonal to $\{f_n\}$. \[\ip{g_i}{f_j}= \ip{\tilde{h}_1+ \tilde{h}_{m+i}}{h_{m+j}}= \ip{\tilde{h}_1}{h_{m+j}}+ \ip{\tilde{h}_{m+i}}{h_{m+j}}= 0+ \delta_
{ij}= \delta_{ij}.\] So $\{g_n\}$ is codual of $\{f_n\}$. Now\[\sum_{n=1}^{\infty}\ip{f}{g_n}f_n= \sum_{n=1}^{\infty}\ip{f}{\tilde{h}_1}h_{m+n}+  \sum_{n=1}^{\infty}\ip{f}{\tilde{h}_{m+n}}h_{m+n}= f,\] whenever $f \in \overline{\text{span}}\{f_n\}=\overline{\text{span}}\{h_{m+i}\}_{i=1}^{\infty}$, a space of codimension $m$. This shows that $\{g_n\}$ is also pseudo-dual to $\{f_n\}$.\,However, \[\sum_{n=1}^{\infty}|\ip{h_1}{g_n}|^2= \sum_{n=1}^{\infty} |\ip{h_1}{\tilde{h}_1+ \tilde{h}_{m+n}}|^2= \sum_{n=1}^{\infty} 1^2= \infty.\] Therefore, $\{g_n\}$ is not Bessel. Thus, there exists a pseudo-Riesz basis which has a non-Bessel pseudo-dual and non-Bessel pseudo-codual.
\begin{theorem}\label{(2.6)}
If $\{f_n\}_{n=1}^{\infty}$ is a pseudo-Riesz basis, then 
\begin{enumerate}
    \item every Bessel pseudo-dual of $\{f_n\}$ is a pseudo-Riesz basis.
    \item every Bessel pseudo-codual of $\{f_n\}$ is a pseudo-Riesz basis.
\end{enumerate}
\end{theorem}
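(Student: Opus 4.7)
The plan is to invoke the characterization proved earlier: a Bessel sequence is a pseudo-Riesz basis if and only if its synthesis operator is Fredholm. So what we really need to show is that if $S_f$ is Fredholm, then for any Bessel pseudo-dual or pseudo-codual $\{g_n\}$, the operator $S_g$ is Fredholm as well. The key algebraic tool is that Fredholmness is stable under finite-rank (even compact) perturbations, and that any Fredholm operator has a pseudo-inverse modulo finite rank.

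The first step would be to fix the setup. Because $S_f$ is Fredholm, there is a Fredholm operator $R:\HH\to l^2(\N)$ and finite-rank operators $F_1,F_2$ with $RS_f=I+F_1$ and $S_fR=I+F_2$. We have also observed in the preceding discussion that a Bessel pseudo-dual satisfies $S_f S_g^*=I+K$ with $K$ finite rank, and a Bessel pseudo-codual satisfies $S_g^*S_f=I+F$ with $F$ finite rank; from these two relations alone we already obtained that $\{g_n\}$ is a pseudo-frame in case (1) and a pseudo-Riesz sequence in case (2). What remains is to establish the complementary property in each case.

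For part (1), the plan is to multiply the identity $S_f S_g^* = I + K$ on the left by $R$ to get $(I+F_1)S_g^* = R + RK$, and then rearrange into
\[
S_g^* \;=\; R \;+\; RK \;-\; F_1 S_g^*.
\]
Both $RK$ and $F_1 S_g^*$ are finite-rank operators (as compositions involving a finite-rank factor), so $S_g^*$ differs from the Fredholm operator $R$ by a finite-rank operator and is therefore Fredholm. Hence $S_g$ is Fredholm, and by the earlier characterization $\{g_n\}$ is a pseudo-Riesz basis. Part (2) is completely symmetric: starting from $S_g^*S_f=I+F$, multiply on the right by $R$ to obtain $S_g^*(I+F_2) = R + FR$, so
\[
S_g^* \;=\; R \;+\; FR \;-\; S_g^* F_2,
\]
again a finite-rank perturbation of the Fredholm operator $R$, making $S_g^*$ (and hence $S_g$) Fredholm.

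There is no serious obstacle; the only point that deserves a careful line in the final write-up is the verification that the correction terms ($RK$, $F_1S_g^*$, $FR$, $S_g^*F_2$) are genuinely finite rank, which follows from the elementary fact that bounded operators composed with a finite-rank operator on either side remain finite rank. One could alternatively phrase the argument via the Atkinson characterization of Fredholmness (the existence of a two-sided inverse modulo compact operators): both $\{g_n\}$ and its adjoint relation produce such inverses for $S_g^*$, which gives Fredholmness immediately. I would use the explicit finite-rank computation above since the preliminaries already phrase everything in those terms.
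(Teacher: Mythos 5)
Your proposal is correct and follows essentially the same route as the paper's own proof: in each case you multiply the (co)dual relation $S_fS_g^*=I+K$ or $S_g^*S_f=I+F$ by a pseudo-inverse of $S_f$ on the appropriate side and conclude that $S_g^*$ is a finite-rank (hence compact) perturbation of a Fredholm operator. The only cosmetic difference is that the paper invokes two separate one-sided pseudo-inverses ($A$ with $AS_f=I+K$ for part (1), $B$ with $S_fB=I+L$ for part (2)) where you use a single two-sided pseudo-inverse $R$; the algebra is identical.
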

\begin{proof}
1) Let $\{g_n\}$ be a Bessel pseudo-dual of $\{f_n\}$.\,Then $S_f S_g^*= I+ F$ where $F$ is a finite-rank operator. Since $S_f$ is Fredholm, there exists a Fredholm operator $A$ such that $AS_f= I+ K$ where $K$ is a compact operator. Now, $S^*_g+ KS^*_g= AS
_fS^*_g= A+ AF \implies S^*_g= A+ AF- KS^*_g$. Since both $AF$ and $KS^*_g$ are compact and $S^*_g$ is a compact perturbation of a Fredholm operator $A$, we have that $S^*_g$ is Fredholm as well. This implies $S_g$ is Fredholm. Therefore, $\{g_n\}$ is pseudo-Riesz basis.

2) Let $\{h_n\}$ be a Bessel pseudo-codual of $\{f_n\}$. Then $S_h^* S_f= I+ G$, where G is a finite-rank operator.\,Since $S_f$ is Fredholm, there exists a Fredholm operator $B$ such that $S_fB= I+ L$ where $L$ is a compact operator.\,Now $S^*_h+ S^*_hL= S
_g^*S_fB= B+ GB \implies S^*_h= B+ GB- S^*_hL$. Since both $GB$ and $S^*_hL$ are compact and $S^*_h$ is a compact perturbation of a Fredholm operator $B$,\,$S^*_h$ is also Fredholm. This implies thar $S_h$ is Fredholm. Therefore, $\{h_n\}$ is pseudo-Riesz basis.
\end{proof}


\section{Perturbation Results}
The classical Paley-Wiener theorem \cite{paleywiener1934} is one of the first and most well-known perturbation/stability results for Riesz bases. It states that if $\{f_n\}_{n=1}^{\infty}$ is a Riesz basis for a Hilbert space $ \mathcal{H}$ and $\{g_n\}_{n=1}^{\infty}$ is a sequence of vectors such that 
$$ \big\| \sum c_i \left(f_i- g_i\right)\big\| \leq \lambda \big\|\sum c_i f_i\big\|$$  
for some constant $\lambda,\,0 \leq \lambda < 1$, and all finite scalar sequences $(c_n)$, then $\{g_n\}$ is also a Riesz basis for $\mathcal{H}$. Later Birkhoff and Rota formulated another, now classical, perturbation result: If $\{e_n\}$ is a Riesz basis and if $\{f_n\}$ is an orthonormal sequence which is quadratically close to $\{e_n\}$ in the sense 
\[\sum \|f_n-e_n\|^2<\infty,\] then $\{f_n\}$ is also Riesz basis. Their condition is weaker but very often easier to check. They famously used their perturbation result to prove the completeness of  Sturm-Liouville eigenfunctions. Their result was later formalized by Bari \cite{bari1951} who coined the term quadratic closeness. 

Much later, Cassaza and Christensen \cite{cazassa1997perturbation} proved analogs of Paley- Wiener and Bari-type perturbation results for near-Riesz bases.

In this section, we will present both Paley-Wiener and Bari-type perturbation results for pseudo-Riesz sequences, pseudo-frames, and pseudo-Riesz bases. The crucial role in our proofs will be played by the following fundamental stability theorem of Kato~\cite{Kato1980:chIV}.

\begin{theorem}[Theorem 5.22, Chapter IV  \cite{Kato1980:chIV}]
 Let $T:\HH\to\KK$ be a semi-Fredholm operator. Suppose $A:\HH\to\KK$ is a $T$-bounded operator in the sense that $$\|Au\| \leq a\|u\|+ b\|Tu\|$$ for all $u\in\HH$, where $a, b$ are nonnegative constants and $a < \left(1-b\right)\gamma$, with
    \[\gamma= \underset{u \notin \NN_T}{inf} \frac{\|T u\|}{d\left(u, \NN_T\right)}>0.\] Then $S= T+A$ is semi-Fredholm.\,In addition,\,$\text{dim}\,\NN_S\leq \text{dim}\,\NN_T, \text{codim}\,\RR_S\leq \text{codim}\,\RR_{T}$, and $\text{ind}\,S = \text{ind}\,T.$
\end{theorem}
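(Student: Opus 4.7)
The plan is to leverage a single fundamental bounded-below estimate and then invoke a connectedness/homotopy argument for the index. Since $d(u,\NN_T)=\|u\|$ for $u\in\NN_T^\perp$, the definition of $\gamma$ gives $\|Tu\|\geq\gamma\|u\|$ on that subspace. Chaining this with the $T$-boundedness of $A$ yields
\[
\|Su\|\geq \|Tu\|-\|Au\|\geq (1-b)\|Tu\|-a\|u\|\geq \bigl((1-b)\gamma-a\bigr)\|u\|
\]
for every $u\in\NN_T^\perp$. Writing $c:=(1-b)\gamma-a>0$, the restriction $S|_{\NN_T^\perp}$ is bounded below by $c$. Immediately $S(\NN_T^\perp)$ is closed and $\NN_S\cap\NN_T^\perp=\{0\}$, so the orthogonal projection onto $\NN_T$ embeds $\NN_S$ into $\NN_T$, giving $\dim\NN_S\leq\dim\NN_T$.

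Next I would show that $\RR_S$ is closed and $\text{codim}\,\RR_S\leq\text{codim}\,\RR_T$. In the easier case $\dim\NN_T<\infty$, the decomposition $u=u_1+u_2\in\NN_T^\perp\oplus\NN_T$ gives $\RR_S=S(\NN_T^\perp)+A(\NN_T)$, a sum of a closed and a finite-dimensional subspace, hence closed; together with the first step this already delivers left semi-Fredholmness. In the case $\text{codim}\,\RR_T<\infty$, I would set $V:=T|_{\NN_T^\perp}\colon\NN_T^\perp\to\RR_T$ (an isomorphism by the bounded-below property of $T$) and let $P$ be the orthogonal projection of $\KK$ onto $\RR_T$. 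A short computation from the $T$-bound gives $\|AV^{-1}\|\leq a/\gamma+b<1$, so $I_{\RR_T}+PAV^{-1}$ is invertible on $\RR_T$ by a Neumann series; consequently $PS|_{\NN_T^\perp}=(I_{\RR_T}+PAV^{-1})V$ is an isomorphism onto $\RR_T$, and in particular $PS\colon\HH\to\RR_T$ is surjective. Splitting $\HH$ along $\ker(PS)$ and writing $\RR_S$ as the sum of the graph of a bounded operator $\RR_T\to\RR_T^\perp$ and the finite-dimensional subspace $(I-P)A(\ker(PS))\subseteq\RR_T^\perp$ then delivers both closedness of $\RR_S$ and $\text{codim}\,\RR_S\leq\dim\RR_T^\perp=\text{codim}\,\RR_T$.

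For the index identity I would run the homotopy $T_t:=T+tA$ for $t\in[0,1]$. Each $T_t$ is a $T$-bounded perturbation of $T$ with constants $ta,tb$ still satisfying the hypothesis, so by the previous steps each $T_t$ is semi-Fredholm, with the bounded-below estimate on $\NN_T^\perp$ depending continuously on $t$. Standard semi-Fredholm perturbation theory then yields upper semi-continuity of $\dim\NN_{T_t}$ and $\text{codim}\,\RR_{T_t}$; since the index is integer-valued and $[0,1]$ is connected, it must be constant along the homotopy, so $\text{ind}\,S=\text{ind}\,T_1=\text{ind}\,T_0=\text{ind}\,T$.

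The main obstacle I expect is the $\text{codim}\,\RR_T<\infty$ branch of the closed-range argument. The Neumann series depends on the sharp norm bound $\|AV^{-1}\|\leq a/\gamma+b<1$, which is precisely where the hypothesis $a<(1-b)\gamma$ is used with no slack, and extracting closedness together with the codimension count for $\RR_S$ requires careful bookkeeping with the projections $P$ and $I-P$. The remaining ingredients—closedness in the $\dim\NN_T<\infty$ case, the bound on $\dim\NN_S$, and the homotopy step for index invariance—are largely routine once this estimate is in hand.
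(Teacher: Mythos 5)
This theorem is quoted from Kato (Theorem IV.5.22) with a citation and the paper gives no proof of it, so there is no internal argument to compare yours against; I can only assess your proposal on its own terms. The core of it is sound: the lower bound $\|Su\|\geq\left((1-b)\gamma-a\right)\|u\|$ on $\NN_T^{\perp}$ is the right starting point, it correctly gives $\NN_S\cap\NN_T^{\perp}=\{0\}$ and hence $\dim\NN_S\leq\dim\NN_T$, and both branches of your closed-range argument work: in the case $\dim\NN_T<\infty$ the decomposition $\RR_S=S(\NN_T^{\perp})+A(\NN_T)$ is closed plus finite-dimensional, and in the case $\mathrm{codim}\,\RR_T<\infty$ the estimate $\|PAV^{-1}\|\leq a/\gamma+b<1$ and the Neumann series do make $PS|_{\NN_T^{\perp}}$ an isomorphism onto $\RR_T$, from which closedness of $\RR_S$ and $\mathrm{codim}\,\RR_S\leq\mathrm{codim}\,\RR_T$ follow as you sketch. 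So semi-Fredholmness and the two inequalities are essentially in hand.

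The genuine gap is the index identity. Upper semicontinuity of $\dim\NN_{T_t}$ and $\mathrm{codim}\,\RR_{T_t}$ along your homotopy does \emph{not} imply that their difference is continuous: both quantities may jump downward at a single parameter value, by different amounts, without violating upper semicontinuity, so ``integer-valued on a connected set'' does not force constancy. What you would actually need is local constancy of the index on the semi-Fredholm class, and that is precisely the nontrivial content of the theorem being proved (it is the special case $b=0$, $a=\|A\|$), so invoking it as standard perturbation theory is circular. The repair is already latent in your second branch: since $\dim\ker(PS)=\dim\NN_T$, $S(\ker(PS))\subseteq\RR_T^{\perp}$, and $S(\ker(PS))$ meets the graph $S(W)$ only in $\{0\}$, one gets the exact identity $\mathrm{codim}\,\RR_S=\mathrm{codim}\,\RR_T-\left(\dim\NN_T-\dim\NN_S\right)$, which yields $\mathrm{ind}\,S=\mathrm{ind}\,T$ directly whenever $\mathrm{codim}\,\RR_T<\infty$ (including the case $\dim\NN_T=\infty$, where it forces $\dim\NN_S=\infty$). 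The one case your argument genuinely does not reach is $\dim\NN_T<\infty$ with $\mathrm{codim}\,\RR_T=\infty$, where one must show $\mathrm{codim}\,\RR_S=\infty$; your inequalities point the wrong way for that, and this case needs a separate argument (e.g.\ Kato's, or a duality argument applied to $S^*$ after re-deriving the relative bound for $A^*$).
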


\subsection{Paley-Wiener-type perturbations}

\begin{theorem}\label{(1.3)}
Let $\{f_n\}_{n=1}^{\infty}$ be a pseudo- Riesz sequence (pseudo-frame, pseudo-Riesz basis resp.) for $\HH$. Suppose $\{g_n\}_{n=1}^{\infty}$ is a sequence of elements of $\HH$ such that 
\begin{equation}\label{(1.4)}
    \| \sum_{i=1}^n c_i \left(g_i- f_i\right)\| \leq \lambda \|\sum_{i=1}^{n} c_i f_i\|+ \mu \sqrt{\sum_{i=1}^{n} |c_n|^2},
\end{equation} for all choices of the scalars $c_1,\,c_2,\,c_3,\dots,\,c_n\in \C,$ and all $n= 1,\,2,\,3,\,\dots $ and for some constants $\lambda, \mu \geq 0$\,such that
$$\mu < \left(1- \lambda\right)\gamma\hspace{0.2cm}\text{where}\hspace{0.1cm}\gamma= \gamma(S_f)= \underset{u \notin \NN_{S_f}}{inf} \frac{\|S_f u\|}{d\left(u,\,\NN_{S_f}\right)}$$
\vspace{0.2cm}
Then $\{g_n\}_{n=1}^{\infty}$ is also a pseudo-Riesz sequence (pseudo-frame, pseudo-Riesz basis resp.) for $\HH$. 
\end{theorem}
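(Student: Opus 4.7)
The plan is to translate the hypothesis into the language of synthesis operators and then invoke Kato's stability theorem directly. Let $S_f, S_g : l^2(\N) \to \HH$ denote the synthesis operators of $\{f_n\}$ and $\{g_n\}$. First I would show that $\{g_n\}$ is Bessel. Since $\{f_n\}$ is Bessel with some bound $M$, the right-hand side of (\ref{(1.4)}) is bounded by $(\lambda\sqrt{M}+\mu)\|c\|$ for any finitely supported $c$, so the linear map $c\mapsto\sum c_i(g_i-f_i)$ is bounded on the dense subspace of finite sequences and extends to a bounded operator $A:l^2(\N)\to\HH$ with $Ae_n=g_n-f_n$. This shows $S_g=S_f+A$ is bounded, so $\{g_n\}$ is Bessel, and by density the perturbation inequality extends to
\begin{equation*}
\|Au\|\leq \mu\|u\|+\lambda\|S_f u\|\qquad\text{for all } u\in l^2(\N).
\end{equation*}

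The second step is to verify Kato's hypotheses. The displayed inequality is precisely the $S_f$-boundedness condition with constants $a=\mu$ and $b=\lambda$. In each of the three cases, the characterizations proved earlier give that $S_f$ is semi-Fredholm with closed range (left semi-Fredholm for pseudo-Riesz sequences, right semi-Fredholm for pseudo-frames, Fredholm for pseudo-Riesz bases), hence $\gamma=\gamma(S_f)>0$, and the standing assumption $\mu<(1-\lambda)\gamma$ is exactly $a<(1-b)\gamma$. Kato's theorem then yields that $S_g=S_f+A$ is semi-Fredholm with
\begin{equation*}
\dim\NN_{S_g}\leq \dim\NN_{S_f},\qquad \text{codim}\,\RR_{S_g}\leq \text{codim}\,\RR_{S_f}.
\end{equation*}

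The final step is to read off each of the three conclusions from these two inequalities together with the fact that $S_g$ has closed range. If $\{f_n\}$ is a pseudo-Riesz sequence then $\dim\NN_{S_f}<\infty$, whence $\dim\NN_{S_g}<\infty$, so $\{g_n\}$ is a pseudo-Riesz sequence. If $\{f_n\}$ is a pseudo-frame then $\text{codim}\,\RR_{S_f}<\infty$, whence $\text{codim}\,\RR_{S_g}<\infty$, so $\{g_n\}$ is a pseudo-frame. If $\{f_n\}$ is a pseudo-Riesz basis then both finiteness conditions transfer to $S_g$, making it Fredholm. I do not anticipate any serious obstacle: the only mildly delicate point is the initial extension of (\ref{(1.4)}) from finite sequences to all of $l^2(\N)$, and the rest is a direct application of Kato's theorem followed by a three-way case split at the end.
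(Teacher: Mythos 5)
Your proposal is correct and follows essentially the same route as the paper: rewrite (\ref{(1.4)}) as $\|(S_g-S_f)c\|\leq\lambda\|S_fc\|+\mu\|c\|$, deduce Besselness of $\{g_n\}$ from boundedness of $S_g=S_f+(S_g-S_f)$, and apply Kato's stability theorem with $a=\mu$, $b=\lambda$. Your write-up is in fact more explicit than the paper's about the density extension and the three-way case split, which the paper dismisses as ``essentially the same.''
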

\begin{proof}
    Since $S_f$ is the synthesis operator associated to the pseudo-Riesz sequence $\{f_n\}_{n=1}^{\infty}$, $\textit{S}_f$ is a semi-Fredholm operator with a finite dimensional nullspace $\NN_{S_f}$. We can write the relation $\left(\ref{(1.4)} \right)$ as
    $$\|\left(\textit{S}_g - \textit{S}_f\right) \left(c\right)\| \leq \lambda \|\textit{S}_f \left(c\right)\|+ \mu \|c\|$$
    for all finitely supported scalar sequences $(c_n).$ Form here, it follows first that $S_g=(S_g-S_f)+S_f$ is bounded, i.e., $\{g_n\}$ is Bessel. Moreover, by Kato's stability theorem above we obtain that $S_g$ is semi-Fredholm with a finite dimensional $\NN_{S_g}$. 
    The proofs for the cases when $\{f_n\}$ is pseudo-frame or a pseudo-Riesz basis are essentially the same. 
\end{proof}

\subsection{Bari-type perturbations}

We first show a simpler result that applies only for pseudo-Riesz bases.  

\begin{theorem}\label{(3.1)}
    Suppose $\{f_n\}_{n=1}^{\infty}$ is a pseudo-Riesz basis for $\HH$. If $\{g_n\}_{n=1}^{\infty}$ is quadratically close to $\{f_n\}_{n=1}^{\infty}$, i.e.,
    \[\sum_{n=1}^\infty\norm{f_n-g_n}^2<\infty,\]
    then $\{g_n\}_{n=1}^{\infty}$ is also a pseudo-Riesz basis for $\HH$.
\end{theorem}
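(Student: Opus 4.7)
The plan is to derive this from the characterization that $\{f_n\}$ is a pseudo-Riesz basis if and only if its synthesis operator $S_f$ is Fredholm, combined with the classical fact that the Fredholm property is preserved under compact (in particular, Hilbert-Schmidt) perturbations.

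First I would introduce the difference operator $D : l^2(\N) \to \HH$ determined on the standard orthonormal basis by $D e_n = g_n - f_n$. The quadratic closeness assumption is precisely the statement
\[
\sum_{n=1}^{\infty} \norm{D e_n}^2 = \sum_{n=1}^{\infty} \norm{g_n - f_n}^2 < \infty,
\]
so $D$ extends to a Hilbert-Schmidt operator from $l^2(\N)$ to $\HH$; in particular $D$ is bounded and compact. Since $S_f$ is bounded (because $\{f_n\}$ is Bessel) and $D$ is bounded, the operator $S_g = S_f + D$ is bounded, which already yields that $\{g_n\}$ is Bessel.

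Next, by the proposition characterizing pseudo-Riesz bases, $S_f$ is Fredholm. By the standard stability result that Fredholm operators are stable under compact perturbations (equivalently, $S_f$ has a pseudo-inverse $R$ with $RS_f = I + K_1$ and $S_f R = I + K_2$ for compact $K_1, K_2$, so $R(S_f + D) = I + K_1 + RD$ and $(S_f + D)R = I + K_2 + DR$ with $RD$ and $DR$ compact), $S_g = S_f + D$ is also Fredholm. Applying the characterization in the reverse direction, $\{g_n\}$ is a pseudo-Riesz basis.

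There is no real obstacle here: the crux is the observation that quadratic closeness, which is just a sequence-level hypothesis, translates verbatim into the Hilbert-Schmidt condition $\sum \norm{De_n}^2 < \infty$ for the difference of synthesis operators, and the rest is a routine invocation of Fredholm perturbation theory. I would note as a remark that, unlike the Paley-Wiener-type Theorem \ref{(1.3)}, no smallness hypothesis on the quadratic closeness constant is required, because compactness already suffices to preserve Fredholmness without any norm restriction.
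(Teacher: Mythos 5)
Your proof is correct and follows essentially the same route as the paper: both show that $S_g-S_f$ is Hilbert--Schmidt (hence bounded, giving Besselness, and compact), and then invoke stability of Fredholmness under compact perturbations together with the synthesis-operator characterization of pseudo-Riesz bases. Your added remark that no smallness condition is needed, in contrast to Theorem~\ref{(1.3)}, is accurate.
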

\begin{proof}
    Since $\{f_n\}_{n=1}^{\infty}$ is a pseudo-Riesz basis for $\HH$ its associated synthesis operator $S_f$ is Fredholm. The a priori densely defined synthesis operator $S_g$ of the sequence $\{g_n\}$ satisfies 
    
    $$\|(S_g-S_f)c\|^2\leq \sum_{n=1}^{\infty}|c_n|^2\sum_{n=1}^\infty \|g_n- f_n\|^2\leq   \sum_{n=1}^{\infty} \|g_n- f_n\|^2\norm{c}^2,$$ for every finitely supported $c\in l^2(\N)$. This implies that $S_g$ is bounded and consequently $\{g_n\}$ is Bessel. 
    Furthermore, 
    $$\sum_{n=1}^{\infty}\|\left(\textit{S}_g- \textit{S}_f\right)e_n\|^2= \sum_{n=1}^{\infty} \| g_n- f_n\|^2< \infty,$$ where $\{e_n\}$ denotes the standard basis in $l^2(\N)$.
    This shows that $S_g-S_f$ is a Hilbert- Schmidt, and hence compact. Thus $S_g=(S_g-S_f)+S_f$ is Fredholm, and consequently the sequence $\{g_n\}_{n=1}^{\infty}$ is a pseudo- Riesz basis for $\HH$.
\end{proof}

\begin{theorem}\label{(3.2)}
    Let $\{f_n\}_{n=1}^{\infty}$ be a pseudo-Riesz sequence (pseudo-frame, pseudo-Riesz basis resp.) for $\HH$. If $\{g_n\}$ is a sequence in $\HH$ such that $$\sum_{n=1}^{\infty} \|f_n - g_n\|^2 < \gamma^2 \hspace{0.5cm} \text{where}\hspace{0.2cm} \gamma= \underset{c \notin \NN_{S_f}}{\text{inf}} \frac{\|\textit{S}_f c\|}{d\left(c, \NN_{S_f}\right)} > 0,$$
    then $\{g_n\}_{n=1}^{\infty}$ is also a pseudo-Riesz sequence (pseudo-frame, pseudo-Riesz basis resp.) for $\HH$. 
\end{theorem}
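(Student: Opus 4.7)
The strategy is essentially identical to the proof of Theorem~\ref{(3.1)}: realize $S_g$ as a small perturbation of the synthesis operator $S_f$ and invoke Kato's stability theorem. What is bought by replacing the condition $\sum\|f_n-g_n\|^2<\infty$ with $\sum\|f_n-g_n\|^2<\gamma^2$ is that we no longer need $S_f$ to be Fredholm; mere semi-Fredholmness suffices, which is exactly the scope demanded by the three cases (pseudo-Riesz sequence, pseudo-frame, pseudo-Riesz basis).

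First I would handle the Besselity of $\{g_n\}$. On finitely supported sequences $c=(c_n)\in l^2(\N)$ the Cauchy--Schwarz estimate
\[
\norm{(S_g-S_f)c}^2 = \Bigl\|\sum c_n(g_n-f_n)\Bigr\|^2 \leq \Bigl(\sum|c_n|\,\norm{g_n-f_n}\Bigr)^2 \leq \Bigl(\sum\norm{g_n-f_n}^2\Bigr)\norm{c}^2
\]
shows that $S_g-S_f$ extends to a bounded operator on $l^2(\N)$ with norm at most $a:=\bigl(\sum\norm{g_n-f_n}^2\bigr)^{1/2}$. Since $S_f$ is bounded, so is $S_g=S_f+(S_g-S_f)$, and hence $\{g_n\}$ is Bessel.

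Next I would apply Kato's theorem with $T=S_f$ and $A=S_g-S_f$. The bound established above is precisely $\norm{Au}\leq a\norm{u}+b\norm{S_fu\|}$ with $b=0$, so the requisite inequality $a<(1-b)\gamma=\gamma$ is exactly the hypothesis $\sum\norm{f_n-g_n}^2<\gamma^2$. (Recall that $\gamma>0$ holds because the semi-Fredholm $S_f$ has closed range.) Kato's theorem then asserts that $S_g$ is semi-Fredholm, that $\dim\NN_{S_g}\leq\dim\NN_{S_f}$, that $\text{codim}\,\RR_{S_g}\leq\text{codim}\,\RR_{S_f}$, and that $\operatorname{ind} S_g=\operatorname{ind} S_f$.

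Finally I would read off the three cases from these conclusions. If $\{f_n\}$ is a pseudo-Riesz sequence, $\dim\NN_{S_f}<\infty$, so $\dim\NN_{S_g}<\infty$ and $S_g$ has closed range, making $\{g_n\}$ a pseudo-Riesz sequence. If $\{f_n\}$ is a pseudo-frame, $\text{codim}\,\RR_{S_f}<\infty$, so $\text{codim}\,\RR_{S_g}<\infty$ with closed range, giving that $\{g_n\}$ is a pseudo-frame. Combining both yields the pseudo-Riesz basis case. The only mildly delicate point is the a priori boundedness of $S_g$, which is handled in the opening calculation; beyond that the proof is a direct application of the cited theorem.
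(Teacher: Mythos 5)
Your proposal is correct and follows essentially the same route as the paper: bound $S_g-S_f$ via Cauchy--Schwarz to get Besselity of $\{g_n\}$, then apply Kato's stability theorem with $b=0$ and $a=\bigl(\sum\norm{g_n-f_n}^2\bigr)^{1/2}<\gamma$, and read off the three cases from the conclusions on $\NN$ and $\RR$. If anything, your version is slightly more careful than the paper's, which writes $a=\sum\norm{g_n-f_n}^2$ where the square root is what is actually needed.
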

\begin{proof} Let $\{f_n\}_{n=1}^{\infty}$ be a pseudo-Riesz sequence (pseudo-frame, pseudo-Riesz basis resp.).
    Just as in the proof of the previous theorem we conclude that $S_g$ is bounded and consequently $\{g_n\}$ is Bessel. For every $c\in l^2(\N)$ we have 
$$ \|\left(S_g - S_f\right)c\|^2\leq   \sum_{n=1}^\infty \|g_n- f_n\|^2\sum_{n=1}^{\infty}|c_n|^2\leq \sum_{n=1}^{\infty} \|g_n- f_n\|^2 \norm{c}^2.$$ Now, using the Kato's stability theorem with $b = 0$ and $a=\sum_{n=1}^{\infty} \|g_n- f_n\|^2<\gamma$ we obtain that $\{g_n\}$ is also a pseudo-Riesz sequence (pseudo-frame, pseudo-Riesz basis resp.) for $\HH$.
\end{proof}

\bibliographystyle{plain}
\bibliography{paper}

\end{document}